\newtheorem{theorem}{\bf Theorem}[section]
\newtheorem{claim}[theorem]{\bf Claim}
\newtheorem{lemma}[theorem]{\bf Lemma}
\newtheorem{conj}[theorem]{\bf Conjecture}
\newtheorem{observation}[theorem]{\bf Observation}
\newtheorem{question}[theorem]{\bf Question}
\newcommand{\rt}{\right}
\newcommand{\lt}{\left}
\DeclareMathOperator{\Range}{Range}
\begin{document}

\title[On the maximum number of Latin transversals]{On the maximum number of Latin transversals}

\author[R.~Glebov]{Roman Glebov}
\address{Department of Mathematics, ETH, 8092 Zurich, Switzerland.}
\email{roman.l.glebov@gmail.com}
\author[Z.~Luria]{Zur Luria}
\address{Institute of Theoretical Studies, ETH, 8092 Zurich, Switzerland.}
\email{zluria@gmail.com}

\date{\today}

\begin{abstract}

Let $T(n)$ denote the maximal number of transversals in an order-$n$ Latin square.
Improving on the bounds obtained by McKay et al.,
Taranenko recently proved that $T(n) \leq \left((1+o(1))\frac{n}{e^2}\right)^{n}$,
and conjectured that this bound is tight.

We prove via a probabilistic construction that indeed $T(n) = \left((1+o(1))\frac{n}{e^2}\right)^{n}$.
Until the present paper, no superexponential lower bound for $T(n)$ was known. We also give a simpler proof of the upper bound.

\end{abstract}

\maketitle

\section{Introduction}
\label{sec:intro}

An order-$n$ {\em Latin square} is an $n \times n$ matrix $L$ over the symbols $[n]:=\{1, \ldots ,n\}$
such that every symbol appears exactly once in every row and every column.
A {\em transversal} of $L$ is a set of $n$ entries, one from every row and every column and one of every symbol.

Latin squares have been studied since ancient times, and the study of their transversals also has a rich history,
originating from the study of orthogonal Latin squares.
Two Latin squares are orthogonal to each other if each encodes a decomposition of the other into disjoint transversals.
Orthogonal pairs of Latin squares were studied by Euler, who proved that for every $n\neq 2 \mod 4$,
there exists a pair of orthogonal Latin squares of order $n$.

Much subsequent work has considered Latin transversals in the context of orthogonality. Aside from this, the literature on Latin transversals has mainly focused on two types of questions - existence and enumeration. Regarding the existence
question, it is not hard to see that Latin squares of even order do not necessarily have transversals.
Ryser~\cite{Ry67} conjectured that in every Latin square of order $n$, the number of transversals is congruent to $n \mod 2$.
Balasubramanian~\cite{Ba90} proved Ryser's conjecture for Latin squares of even order.
However, it was observed (see, e.g., \cite{AkAl04,CaWa05}) that counterexamples to Ryser's conjecture exist for Latin squares of odd order.
Subsequently, Ryser's conjecture was weakened to the statement that every Latin square of odd order has a transversal -
which remains perhaps the most intriguing open question in the study of transversals to date.
For a survey on the current state of research on Latin transversals, see~\cite{Wa11}.

Considering the enumeration problem, let $T(L)$ denote the number of transversals in a given Latin square $L$.

The {\em order-$n$ cyclic Latin square} over an alphabet $A$ is the addition table of a group over $A$ isomorphic to $(\mathbb{Z}_n,+)$.
In 1991, Vardi~\cite{Va91} conjectured the following.
\begin{conj}
\label{conj:vardi}
Let $C_n$ denote an order-$n$ cyclic Latin square. Then there exist positive constants $c_1$ and $c_2$ such that
\[ c_1 ^n n! \leq T(C_n)\leq c_2^n n!.\]
\end{conj}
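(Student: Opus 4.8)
\medskip

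The upper bound of the conjecture is essentially a corollary of Taranenko's theorem quoted in the abstract: $T(C_{n})\le T(n)\le\bigl((1+o(1))n/e^{2}\bigr)^{n}$, and for $n$ large $(1+o(1))<2$, so this is at most $(2n/e^{2})^{n}=(2/e)^{n}(n/e)^{n}\le(2/e)^{n}n!$ by the standard inequality $n!\ge(n/e)^{n}$; enlarging the constant to cover the finitely many small $n$, the upper bound holds with any fixed $c_{2}>2/e$. So the content is the lower bound, which I would attack by a Fourier computation. First, a reduction: for even $n$ the group $\mathbb{Z}_{n}$ has no complete mapping, so $T(C_{n})=0$ and the conjecture is to be read for odd $n$; assume $n$ odd. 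Labelling symbols, rows and columns by $[n]$ so that $(C_{n})_{ij}\equiv i+j\pmod n$, a transversal is a permutation $\pi\in S_{n}$ (the cell picked in row $i$ being $(i,\pi(i))$) for which $i\mapsto i+\pi(i)$ is again a permutation; equivalently,
\[
T(C_{n})=\#\bigl\{(\pi,\rho)\in S_{n}\times S_{n}\ :\ \rho(i)-\pi(i)=i\ \text{ in }\mathbb{Z}_{n}\text{ for all }i\bigr\}.
\]

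The plan is to evaluate this count by Fourier analysis on $\mathbb{Z}_{n}^{\,n}$. Writing each of the $n$ modular constraints as $\mathbf 1[m\equiv 0]=\tfrac1n\sum_{a\in\mathbb{Z}_{n}}\omega^{am}$ with $\omega=e^{2\pi i/n}$ and interchanging sums gives
\[
T(C_{n})=\frac{1}{n^{n}}\sum_{a\in\mathbb{Z}_{n}^{\,n}}\omega^{-\sum_{i}a_{i}i}\,\bigl|P(a)\bigr|^{2},\qquad P(a):=\sum_{\pi\in S_{n}}\omega^{\sum_{i}a_{i}\pi(i)}=\operatorname{per}\bigl[\omega^{a_{i}j}\bigr]_{i,j=1}^{n}.
\]
For a constant frequency vector $a=a_{0}\mathbf 1$ one has $P(a_{0}\mathbf 1)=\omega^{a_{0}\sum_{i}i}\,n!$, and since $n$ is odd $\sum_{i}i=\tfrac{n(n+1)}2\equiv0\pmod n$, so the phase $\omega^{-a_{0}\sum_{i}i}$ is trivial and this term equals $+\,(n!)^{2}/n^{n}$. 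Hence the $n$ constant vectors together contribute a \emph{positive} main term
\[
\frac{n\,(n!)^{2}}{n^{n}}=n!\cdot\frac{n!}{n^{n-1}}\ \ge\ n!\cdot\frac{(n/e)^{n}}{n^{n-1}}=\frac{n}{e^{n}}\,n!\ \ge\ \Bigl(\frac1e\Bigr)^{n}n!,
\]
which is already of the required order. It would therefore suffice to prove that the aggregate contribution of all \emph{non-constant} frequency vectors is, in absolute value, at most half of this main term.

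This last step is the one I expect to be genuinely hard, and it resists a crude treatment. By Plancherel $\sum_{a\in\mathbb{Z}_{n}^{\,n}}\bigl|P(a)\bigr|^{2}=n^{n}n!$, of which the constant vectors account for only $n\,(n!)^{2}$; so the non-constant vectors carry a \emph{total} $|P(a)|^{2}$-mass whose contribution to $T(C_{n})$, if the phases were aligned, would exceed the main term by the super-exponential factor $n^{n-1}/n!-1$. The estimate must therefore come from cancellation among the phases $\omega^{-\sum_{i}a_{i}i}$, not from absolute values. Concretely I would group the non-constant $a$ by the partition of $[n]$ into the level sets of $a$, analyse each permanent $\operatorname{per}[\omega^{a_{i}j}]$ --- a Vandermonde-type permanent in the $n$-th roots of unity $\omega^{a_{i}}$, which decays sharply as $a$ departs from being constant --- and show by a stationary-phase / local-central-limit analysis that each group's signed contribution is negligible. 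This is precisely the analysis of Eberhard, Manners and Mrazovi\'c for the toroidal semiqueens problem; it in fact determines $T(C_{n})$ up to a $1+o(1)$ factor --- the limiting constant being $e^{-1/2}$ times the above main term --- and in particular establishes that $c_{1}=c_{2}=1/e$ is the truth, matching Taranenko's conjectured tightness. For the purely qualitative Vardi bound somewhat cruder permanent estimates would suffice, but I see no way around a cancellation argument of this type: the elementary approaches --- the union bound, the Lov\'asz Local Lemma applied to the difference-collision events $\{\pi(i)+i=\pi(j)+j\}$, or Chinese-remainder product constructions --- yield only $T(C_{n})>0$, or lower bounds of the form $n^{O(1)}$ or $C^{n}$ that lack the factor $n!$, since in a Latin square every symbol occurs the maximal $n$ times and so offers no sparsity to exploit.
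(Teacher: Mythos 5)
First, a point of order: the statement you were asked about is Conjecture~\ref{conj:vardi}, which this paper does \emph{not} prove --- the authors explicitly state that ``a lower bound of the form conjectured above is still not known'' and only record that Taranenko's upper bound yields $T(C_n)\leq\left((1+o(1))e^{-1}\right)^n n!$. So there is no proof in the paper to compare against, and your task was to prove an open problem (as of this paper's writing). Your treatment of the upper bound is correct and is exactly the paper's observation: $T(C_n)\le T(n)$ plus Theorem~\ref{upperbound} and $n!\ge(n/e)^n$ give $c_2$ slightly above $1/e$. Your remark that the conjecture as literally stated fails for even $n$ (where $T(C_n)=0$) and must be read for odd $n$ is also correct and worth making.

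For the lower bound, however, your proposal is a strategy outline, not a proof, and you say so yourself: the entire difficulty is concentrated in the step you defer, namely showing that the signed contribution of the non-constant frequency vectors $a$ is $o\bigl(n(n!)^2/n^n\bigr)$. Your own Plancherel computation shows why this cannot be waved through: the non-constant vectors carry $|P(a)|^2$-mass of order $n^n\,n!$, whose unsigned contribution $\approx n!$ exceeds the main term $n(n!)^2/n^n\approx n\,n!/e^n$ by a factor of roughly $e^n/n$, so one needs genuine cancellation in the phases $\omega^{-\sum_i a_i i}$, controlled via sharp estimates on the permanents $P(a)$ near and far from the constant vectors. That analysis (carried out by Eberhard, Manners and Mrazovi\'c for the toroidal semiqueens problem, yielding $T(C_n)=(e^{-1/2}+o(1))(n!)^2/n^{n-1}$ for odd $n$) is the substance of the result; citing it is not supplying it, and it postdates this paper. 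Everything you do prove --- the Fourier identity, the evaluation of the constant-frequency terms, the Plancherel bookkeeping --- is correct, but it establishes only that the \emph{main term} has the right order $e^{-n}n!$, not that it dominates. As it stands the proposal has a genuine gap at precisely the step the conjecture turns on. Note also that the present paper's own lower-bound machinery (the probabilistic block construction of Section~\ref{sec:lower}) is of no help here, since it produces \emph{some} Latin square with many transversals rather than saying anything about the cyclic one.
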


Partial results on Conjecture~\ref{conj:vardi} were obtained in \cite{CoKo96,Ko96,Co00,CGKN00}.
However, a lower bound of the form conjectured above is still not known.
Based on numerical evidence, Cooper et al.~\cite{CGKN00} estimated that perhaps Conjecture~\ref{conj:vardi} is true with $c_1\sim c_2\approx 0.39$.

Let $T(n)$ denote the maximum number of transversals in an order-$n$ Latin square.
McKay, McLeod and Wanless~\cite{Mc06} showed that for constants $b \approx 1.719$ and $c \approx 0.614$,
\[
b^n\leq T(n) \leq c^n\sqrt{n} n!
\]

Improving on this, Taranenko~\cite{Ta14} recently proved a better upper bound. Before stating her result, we shall need some notation.

Let $A$ be an order-$n$ $(d+1)$-dimensional array. An {\em element} of $A$ is a tuple $(i_1, \ldots , i_{d+1}) \in [n]^{d+1}$.
A {\em $\lambda$-element} of $A$ is a tuple $(i_1, \ldots , i_{d+1})$ such that $A(i_1, \ldots ,i_{d+1})=\lambda$.
A {\em line} of $A$ is a set of elements obtained by fixing all but one of the indices and allowing the free index to vary over $[n]$.
A {\em hyperplane} of $A$ is a set of elements obtained by fixing one index and allowing the remaining $d$ indices to vary over $[n]$.

An {\em order-$n$ $d$-dimensional Latin hypercube} is an $[n]^{d}$ array over $[n]$ such that every symbol appears exactly once in every line.
In the same way that a permutation may be represented by a permutation matrix, an order-$n$ $d$-dimensional Latin hypercube is equivalent to an $[n]^{d+1}$ 0-1 array with exactly one $1$ in each line, and in what follows we use this definition.

We generalize the notion of a transversal to the high dimensional case. A transversal of $A$ is a set of $n$ $1$-elements of $A$, exactly one from each hyperplane. In the case of Latin squares, this is equivalent to our previous definition.
Let $T(d,n)$ denote the maximum number of transversals in an order-$n$ $d$-dimensional hypercube.

\begin{theorem}[Theorem 6.1 in~\cite{Ta14}]
\label{upperbound}
For every $d\geq 2$ and $n\rightarrow \infty$,
\[
T(d,n) \leq \left((1+o(1))\frac{n^{d-1}}{e^d}\right)^{n}.
\]
\end{theorem}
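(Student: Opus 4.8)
The plan is to translate the problem into a statement about a high–dimensional permanent and then run a greedy/entropy counting argument. Fix a $d$-dimensional Latin hypercube $L\colon[n]^d\to[n]$ and write $T(L)$ for its number of transversals. If we order the $n$ elements of a transversal by their first coordinate, a transversal becomes exactly a tuple $(\pi_2,\dots,\pi_d)\in S_n^{d-1}$ for which the map $i\mapsto L(i,\pi_2(i),\dots,\pi_d(i))$ is a permutation of $[n]$ (this map is forced to be the last coordinate). Thus $T(d,n)=\max_L T(L)=\max_L\bigl|\{(\pi_2,\dots,\pi_d)\in S_n^{d-1}:\ i\mapsto L(i,\pi_2(i),\dots,\pi_d(i))\in S_n\}\bigr|$, which is also the permanent of the corresponding $(d+1)$-dimensional $0$–$1$ array. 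Since $(n!)^{d-1}=\bigl((1+o(1))\tfrac ne\bigr)^{(d-1)n}$ by Stirling, it suffices to show that for every $L$ a uniformly random $(d-1)$-tuple of permutations is sent by $L$ to a permutation with probability at most $\bigl((1+o(1))\tfrac1e\bigr)^n$.

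To prove this, pick a transversal $(c_1,\dots,c_n)$ of $L$ uniformly at random, written so that $c_i$ has first coordinate $i$, say $c_i=(i,x^i_2,\dots,x^i_d,y^i)$ with $y^i=L(i,x^i_2,\dots,x^i_d)$. As this ordering is canonical, $\log T(L)=H(c_1,\dots,c_n)=\sum_{i=1}^nH(c_i\mid c_1,\dots,c_{i-1})$. Given the prefix, a legal $c_i$ is a $1$-element of $L$ whose $j$-th coordinate avoids the $i-1$ values already used in direction $j$; writing $m=n-i+1$, its first $d$ coordinates must lie in a box of volume $m^{d-1}$, and \emph{in addition} its last coordinate $L(i,x_2,\dots,x_d)$ must avoid the $i-1$ symbols $\{y^1,\dots,y^{i-1}\}$. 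If $N_i$ is the number of admissible $(x_2,\dots,x_d)$, then $H(c_i\mid c_{<i})\le\mathbb E[\log N_i]$, and the trivial bound $N_i\le m^{d-1}$ recovers $T(L)\le(n!)^{d-1}$. The whole point of the theorem is that the last-coordinate constraint improves this by a factor $e^{-n+o(n)}$, i.e.\ that
\[
\sum_{i=1}^n\mathbb E\!\left[\log\frac{m^{d-1}}{N_i}\right]\ \ge\ n-o(n).
\]
In the heuristic where $L$ behaves randomly one has $\mathbb E[N_i]\approx m^{d-1}\cdot\tfrac{n-(i-1)}{n}=m^d/n$, and $\sum_i\log\bigl(m^{d-1}/(m^d/n)\bigr)=\sum_i\log(n/m)=n\log n-\log n!=n-o(n)$, which is precisely the required saving; the difficulty is to establish it for every $L$.

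This last inequality is the main obstacle. One cannot get it just from a bound on $\mathbb E[N_i]$: a single symbol may occupy an entire Latin sub-hypercube of the box, so $N_i$ can be as large as $m^{d-1}$ for some prefixes, and the saving has to come from such prefixes being rare or "paid for" at other steps. The plan is to exploit the coupling, conditioned on the partial transversal, between the used coordinates and the used symbols: the box at step $i$ avoids the first $d$ coordinates of $c_1,\dots,c_{i-1}$, and the symbols of those elements are exactly the forbidden set $\{y^1,\dots,y^{i-1}\}$, so the restriction of $L$ to the box cannot avoid the forbidden symbols, and should in fact meet them with roughly their global frequency. I would make this quantitative by a second averaging — for instance by revealing the $n$ elements of the transversal in a uniformly random order, so that "step $i$" sees a uniformly random unused symbol together with a random sub-box, and then bounding the expected number of occurrences of that symbol in that sub-box using the line constraints on $L$ — combined with a convexity (Jensen) step to pass from such $\mathbb E[N_i]$-type estimates to the needed $\mathbb E[\log N_i]$ bounds.

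The case $d=2$ is already the heart of the matter: it says that every order-$n$ Latin square has at most $\bigl((1+o(1))\tfrac n{e^2}\bigr)^n$ transversals, and the remaining dimensions are handled by the same argument (tracking the intermediate hypercubes obtained by fixing $\pi_2,\dots,\pi_{d-1}$, which are "row-Latin" but come with the extra structure inherited from $L$). Finally, Stirling's formula turns the entropy estimate $\log T(L)\le(d-1)\log n!-n+o(n)$ into $T(d,n)\le\bigl((1+o(1))\tfrac{n^{d-1}}{e^d}\bigr)^n$, which is the assertion of the theorem.
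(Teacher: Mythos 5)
Your setup is the same as the paper's (entropy plus the chain rule, bounding each conditional entropy by the log of the number of still-legal $1$-elements), and you correctly isolate the crux: one must show $\sum_i\mathbb{E}[\log(m^{d-1}/N_i)]\geq n-o(n)$, and a bound on $\mathbb{E}[N_i]$ alone in the fixed first-coordinate order does not suffice. But at exactly that point the proposal stops being a proof: the decisive step is announced (``I would make this quantitative by a second averaging\dots combined with a convexity step'') rather than carried out, and the mechanism you sketch --- estimating how often the next forbidden symbol occurs in the surviving sub-box via the line constraints of $L$ --- is not shown to produce the required $e^{-n+o(n)}$ saving for every $L$. This is a genuine gap, not a routine verification.

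What actually closes it in the paper is a specific combinatorial count that your proposal never makes. One reveals the $n$ transversal elements in a uniformly random order by attaching i.i.d.\ uniform labels $\alpha_i\in[0,1]$ to the hyperplanes, and works with the \emph{full} hyperplane (all $n^{d-1}$ of its $1$-elements) rather than a shrinking box. The key claims are: (a) at most $d^2n^{d-2}$ of the $1$-elements of hyperplane $i$ share two or more indices with elements of the transversal, and (b) every other $1$-element $v$ is excluded by \emph{exactly} $d$ distinct transversal elements, one for each of its non-fixed coordinates. Hence, conditioned on $\alpha_i$, such a $v$ is still legal with probability exactly $\alpha_i^{\,d}$, so $\mathbb{E}[N_i\mid\alpha_i]\leq d^2n^{d-2}+n^{d-1}\alpha_i^{\,d}$; Jensen and $\int_0^1\log\bigl(n^{d-1}t^d\bigr)\,dt=\log\bigl(n^{d-1}\bigr)-d$ then give $\log T(L)\leq n\bigl(\log(n^{d-1})-d+o(1)\bigr)$, which is the theorem. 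Note that this single computation absorbs both the $(n!/n^n)^{d-1}$ factor you attribute to the shrinking box and the extra $e^{-n}$ from the symbol constraint, so no separate analysis of symbol frequencies in sub-boxes is needed. To turn your proposal into a proof you would need to supply claims (a) and (b) (or an equivalent exact exclusion count) and the resulting $\alpha_i^{\,d}$ survival probability; without them the central inequality remains unproved.
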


In particular, a Latin square has at most $\left((1+o(1))\frac{n}{e^2}\right)^{n}$ transversals.
This implies the upper bound $T(C_n)\leq \lt((1+o(1))e^{-1}\rt)^n n!$,
slightly below the numerical estimate $0.39^n n!$ of Cooper et al.

The main contribution of the present paper is proving that this bound is asymptotically tight for all $n$ in the case of Latin squares.
As we will discuss in Section~\ref{sec:concl},
it is also tight for all $d$ and infinitely many values of $n$ in the case of general $d$-dimensional Latin hypercubes.

\begin{theorem}
\label{lowerbound}
For every $n$,
\[
T(n) \geq \left((1-o(1))\frac{n}{e^2}\right)^{n}.
\]
\end{theorem}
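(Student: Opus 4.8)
The plan is to use a random Latin square and estimate its expected number of transversals from below. The natural model is $L = R + S$, where $R$ is a uniformly random element of $(\mathbb{Z}_n,+)$-translates—more precisely, think of the entry in cell $(i,j)$ as a sum coming from two independent random permutations or, even more flexibly, build $L$ as a random ``Latin-square-like'' object and correct it. In fact the cleanest route is: take $L$ to be a uniformly random order-$n$ Latin square and compute $\mathbb{E}[T(L)]$; if this is at least $\left((1-o(1))n/e^2\right)^n$, then some Latin square achieves the bound. A transversal corresponds to a permutation $\sigma \in S_n$ such that the symbols $L(i,\sigma(i))$ are all distinct, so
\[
\mathbb{E}[T(L)] = \sum_{\sigma \in S_n} \Pr\big[\,L(i,\sigma(i))\text{ are distinct}\,\big] = n! \cdot \Pr\big[\,L(i,i)\text{ are distinct}\,\big],
\]
using symmetry to fix $\sigma = \mathrm{id}$. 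So everything reduces to estimating the probability that the diagonal of a random Latin square is itself a permutation of $[n]$.

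The main work is a lower bound on $p_n := \Pr[\text{diagonal of random } L \text{ is a transversal}]$. Here I expect the key difficulty: controlling the distribution of the diagonal of a uniformly random Latin square is genuinely hard (this is related to notoriously difficult problems about random Latin squares). To sidestep this, I would instead work with a more tractable random model. One option: use the permanent/entropy machinery. Count pairs (Latin square, transversal) versus count Latin squares: by the Egorychev–Falikman and Brègman bounds, $\#\{\text{order-}n\text{ Latin squares}\} = \left((1+o(1))\frac{n}{e^2}\right)^{n^2}$, and similarly one can count the number of order-$n$ Latin squares together with a distinguished transversal on the diagonal—this is the number of ways to fill an $n\times n$ array with a fixed transversal (say the identity permutation matrix of symbols) on the diagonal extended to a full Latin square. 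If I can show this latter count is at least $\left((1-o(1))\frac{n}{e^2}\right)^{n^2}\cdot\left((1-o(1))\frac{n}{e^2}\right)^{n}/n!$ times something, dividing gives $p_n \geq \left((1-o(1))\frac{1}{e}\right)^{n}/$, hence $\mathbb{E}[T(L)] \geq n!\cdot p_n \geq \left((1-o(1))\frac{n}{e^2}\right)^n$. So the real content is: \emph{the number of order-$n$ Latin squares with prescribed diagonal is at least a $\big((1-o(1))/e\big)^n$ fraction of the total, divided by $n!$.}

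To estimate the number of Latin squares extending a fixed diagonal transversal, I would use the permanent method directly: such a square is a system of $n-1$ off-diagonal ``generalized diagonals'' forming a decomposition, or better, view filling row by row as a sequence of permanent computations with forbidden positions, and apply the lower bound for permanents of doubly stochastic matrices (van der Waerden/Falikman–Egorychev) at each step to get a lower bound of the form $\prod (\text{something})$, then compare with the analogous computation for unrestricted Latin squares where the asymptotic count $\left((1+o(1))\frac{n}{e^2}\right)^{n^2}$ is known. The ratio of these two products should telescope to roughly $(1/e)^n \cdot (1/n!)$ up to $(1+o(1))^n$ factors. Alternatively—and this may be cleaner—I would directly estimate $\mathbb{E}[T(L)]$ via the second-moment-free approach: write $T(L) = \sum_\sigma \mathbf{1}[\sigma \text{ is a transversal}]$ and use linearity plus a clean formula for the probability, obtained from the fact that the joint distribution of $O(1)$-many, or even all $n$, diagonal entries of a random Latin square is close to that of $n$ independent uniform symbols conditioned on being a permutation, with the correction factor controllable by the entropy/permanent estimates above. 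The hard part throughout is making the ``close to independent'' statement quantitative enough to preserve the constant $e^{-2}$ in the exponent's base; the looser combinatorial arguments only give exponential, not the sharp superexponential, bound, which is exactly why a careful permanent-based computation is needed.
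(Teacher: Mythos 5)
Your reduction is clean and correct as far as it goes: for a uniformly random Latin square $L$, linearity of expectation and column symmetry give $\mathbb{E}[T(L)] = n!\,p_n$ with $p_n = \Pr[L(1,1),\ldots,L(n,n)\text{ all distinct}]$, so it suffices to show $p_n \geq ((1-o(1))/e)^n$. But this last estimate is the entire content of the theorem, and your proposal does not actually establish it. The permanent "telescoping" you sketch runs into a concrete obstruction: to count Latin squares with a prescribed rainbow diagonal row by row, you need symbol $i$ to still be available in column $i$ when you reach row $i$, so the process can get stuck, and the bipartite availability graphs at each step are no longer regular. The van der Waerden/Falikman--Egorychev bound therefore does not hand you a product that telescopes to $(1+o(1))^{n^2}\cdot e^{-n}/n!$ times the unrestricted count; controlling the permanents of these irregular, diagonally-constrained matrices to within $(1+o(1))^{n^2}$ precision is exactly the hard step, and "should telescope to roughly $(1/e)^n\cdot(1/n!)$" is asserted rather than proved. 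Likewise, the claim that the diagonal of a random Latin square behaves like $n$ near-independent uniform symbols is precisely the kind of statement about uniformly random Latin squares that is not available at the required precision.

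The paper avoids the uniform model entirely. It builds $L$ as an $n\times n$ block matrix patterned on a small Latin square $S$: each block is an order-$nk$ Latin square over one of $n$ disjoint alphabets, and the non-special blocks are chosen \emph{independently} and uniformly at random. One then counts only transversals meeting each non-special block in exactly $b$ cells: the number of such "potential transversals" is computed exactly by a product formula, and the probability that a given one is rainbow factors over blocks by independence. Within a single small random block, the row/column/symbol symmetry of a uniform Latin square turns the event into choosing $b$ random cells in an arbitrary fixed Latin square, which is elementary to estimate. A final deterministic patching step (swapping out special transversals for new symbols) handles all orders $n$. The moral difference is that the paper's randomness lives in many small independent pieces where exact symmetries make the probability computable, whereas your proposal concentrates all the difficulty into one global statement about uniformly random Latin squares that remains unproved. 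To make your route work you would need to supply a genuine lower bound on the number of Latin squares with a prescribed transversal, which is not a consequence of the standard permanent bounds you cite.
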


Our paper is organized as follows.
In Section~\ref{sec:upper} we present a short alternative proof of the upper bound from Theorem~\ref{upperbound}.
In Section~\ref{sec:lower} we give a probabilistic construction of an order-$n$ Latin square
with at least $\left((1-o(1))\frac{n}{e^2}\right)^{n}$ transversals, proving Theorem~\ref{lowerbound}.
Finally, in Section~\ref{sec:concl} we discuss possible generalizations of these results and related open questions.

\section{A short proof of Theorem~\ref{upperbound}}
\label{sec:upper}
In this section, we present a short proof of Theorem~\ref{upperbound} based on the entropy method.
This method has recently emerged as a powerful tool for a variety of counting problems.
It has been extensively used to obtain upper bounds
(see, e.g. \cite{Ra97, Ka01, LiLu13, LiLu14, Ke15+}), but it has also been used for lower bounds,
as in Johanssen, Khan and Vu's seminal paper~\cite{JKV08} on the threshold for matchings in hypergraphs.

The basic idea is as follows. In order to estimate the cardinality of a set $S$, we sample an element $X \in S$ at random. Since $H(X) = \log(|S|)$, bounds on $H(X)$ translate into bounds on $|S|$. This is of use because there are powerful information theoretic tools for estimating entropies of random variables, and so estimating $H(X)$ is sometimes easier than estimating $|S|$ directly.

Below we summarise the properties of the entropy function that we use.
\begin{enumerate} [$(i)$]
\item The base $e$ entropy of a random variable $X$ is
\[
H(X) = \sum_{x \in \Range(X)}{\Pr(X=x)\log\left(\frac{1}{\Pr(X=x)}\right)}.
\]
The entropy of a discrete random variable can be interpreted as being the amount of information it encodes.
\item
For any discrete random variable $X$, $H(X) \leq \log(|\Range(X)|)$, with equality if and only if $X$ is uniformly distributed.
\item $H(X|Y=y)$ is the entropy of $X$ conditioned on the event $Y=y$. The conditional entropy $H(X|Y)$ is
\[
H(X|Y) = \sum_{y \in \Range(Y)}{\Pr(Y=y) H(X|Y=y)} = \mathbb{E}_Y[H(X|Y=y)].
\]
\item The chain rule: If $X = (X_1, \ldots ,X_n)$ then for any ordering of $[n]$,
\[
H(X) = \sum_{i=1}^n{H(X_i|X_j \text{ such that $j$ precedes $i$})}.
\]
\end{enumerate}

Let $A$ be an order-$n$ $d$-dimensional Latin hypercube in 0-1 representation,
and let $X$ be a transversal of $A$ selected uniformly at random,
so that $e^{H(X)}$ is the number of transversals in $A$.
Let $X_i$ denote the element of $A$ chosen for $X$ from the hyperplane $A_i := A(i,\cdot,\ldots,\cdot)$.
We choose a random ordering of these variables by selecting a number $\alpha_i \in [0,1]$ uniformly at random for each such hyperplane,
and let $\alpha = \{\alpha_i\}_{i=1}^n$. The variables $X_i$ are exposed in order of decreasing $\alpha_i$. Now,
\begin{align}
\log(T(A)) &= H(X)\nonumber\\
&\overset{(iv)}{=} \mathbb{E}_{\alpha}\left[ \sum_{i=1}^n{H(X_i|X_j \text{ such that $\alpha_j > \alpha_i$})}\right]\nonumber\\
&\overset{(iii)}{=} \mathbb{E}_{\alpha}\left[ \sum_{i=1}^n{\mathbb{E}_{X_j:\alpha_j>\alpha_i} \left[ H(X_i|X_j=x_j \text{ for $j$ such that $\alpha_j > \alpha_i$})\right] } \right]\nonumber\\
&= \mathbb{E}_{\alpha}\left[ \sum_{i=1}^n{\mathbb{E}_{X} \left[ H(X_i|X_j=x_j \text{ for $j$ such that $\alpha_j > \alpha_i$})\right] } \right]\nonumber\\
&= \sum_{i=1}^n{\mathbb{E}_{X}\left[\mathbb{E}_{\alpha}\left[ H(X_i|X_j=x_j \text{ for $j$ such that $\alpha_j > \alpha_i$}) \right]\right]},
\label{formula1}
\end{align}
where the last two equalities hold by basic properties of expectation.

We say that two elements $(i_1,\ldots,i_{d+1})$ and $(j_1,\ldots,j_{d+1})$ share an index if there is a $k$ such that $i_k=j_k$.
Given previous variables, $X_i$ cannot choose an element that belongs to the same hyperplane as a previousy seen variable. A $1$-element in $A_i$ is {\em legal} if it does not share an index with a previously seen $X_j$. Let $N_i = N_i(\alpha,X)$ be the number of legal elements in $A_i$.

We cannot say very much about $X_i$'s distribution given previously observed variables, but $X_i$ certainly must choose a legal element and therefore $|\Range(X_i)|\leq N_i$.
From $(i)$ and by applying Jensen's inequality we obtain
\begin{align}
 \mathbb{E}_{\alpha}[H(X_i|X_j=x_j \text{ for $j$ such that $\alpha_j > \alpha_i$})] &\leq \mathbb{E}_{\alpha}[ \log(N_i)]\nonumber\\
 &= \mathbb{E}_{\alpha_i}[ \mathbb{E}_{\alpha_j:j \ne i}[\log(N_i)]] \nonumber\\
 &\leq  \mathbb{E}_{\alpha_i}[ \log(\mathbb{E}_{\alpha_j:j \ne i}[N_i])].
 \label{formula2}
\end{align}

The next step is to compute $ \mathbb{E}_{\alpha_j:j \ne i}[N_i] $ using linearity of expectation.
Conditioning on $X$ and $\alpha_i$, what is the probability that a $1$-element of the $i$-th hyperplane is legal?

There is always one $1$-element, the one chosen by $X_i$, that is legal for $X_i$ regardless of the ordering.
To compute the probability that a different $1$-element of the $i$-th hyperplane is legal,
we must count the number of different variables $X_j$ that rule out that element for $X_i$.

Here we have to be a little careful. In the two dimensional case, every $1$-element in $A_i$ except $X_i$ is ruled out by exactly two variables. In the general $d$-dimensional case, every $1$-element is ruled out by {\em at most} $d$ variables. Our task is to show that a typical $1$-element is ruled out by exactly $d$ variables.

Let $V$ denote the set of $1$-elements in $A_i$.
Let $U$ denote the set of $1$-elements of $A_i$ that share at least two indices with an element of $X$.
The following claim provides an upper bound on the size of $U$.

\begin{claim}
\label{cla:1}
For the set $U$ as described above, $|U|\leq d^2\cdot n^{d-2}$.
\end{claim}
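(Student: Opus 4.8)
The plan is to prove the bound by a union bound over the elements of the transversal $X$ and over pairs of coordinate positions, exploiting the fact that a Latin-type array loses a factor of $n$ in its number of $1$-elements each time a coordinate is frozen.

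The two structural inputs I would set up first are the following. First, $A_i$ is itself an order-$n$ $d$-dimensional $0$-$1$ array with exactly one $1$ in each line, so freezing one of its coordinates leaves a $(d-1)$-dimensional array with the same property, hence with exactly $n^{d-2}$ one-elements; when $d \geq 3$, freezing two of its coordinates leaves a $(d-2)$-dimensional such array with $n^{d-3}$ one-elements. Second, since $X$ is a transversal, any two of its elements disagree in every coordinate, so exactly one element of $X$ lies in $A_i$ --- this is $X_i$, with first coordinate $i$ --- and every $x \in X \setminus \{X_i\}$ has first coordinate different from $i$.

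I would then split $U$ according to which element of $X$ witnesses membership. If $v \in U$ shares at least two indices with $X_i$, then, since $v$ and $X_i$ automatically agree in the first coordinate, they agree in some coordinate $l \in \{2,\dots,d+1\}$; the first structural fact bounds the number of such $v$ by $n^{d-2}$ for each $l$, for a total of at most $d \cdot n^{d-2}$. If instead $v \in U$ shares at least two indices with some $x \in X \setminus \{X_i\}$, then $v$ and $x$ disagree in the first coordinate, so they agree in some pair of coordinates $\{k,l\} \subseteq \{2,\dots,d+1\}$; when $d \geq 3$ the second structural fact bounds the number of such $v$ by $n^{d-3}$ for each choice of $x$ and $\{k,l\}$, for a total of at most $n \cdot \binom{d}{2} \cdot n^{d-3} = \binom{d}{2} n^{d-2}$. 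Summing the two contributions gives $|U| \leq \left(d + \binom{d}{2}\right) n^{d-2} = \frac{d(d+1)}{2}\, n^{d-2} \leq d^2 n^{d-2}$.

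The one genuine subtlety --- and the only place the union bound by itself is too lossy --- is the case $d = 2$, where freezing two coordinates of $A_i$ freezes all of them and the quantity $n^{d-3}$ is not the right bound. I would dispose of this case separately and up front: a $1$-element $v$ of $A_i$ agreeing with some $x \in X$ in two coordinates but differing from it in the first would produce two distinct $1$-elements of $A$ lying on a common line, contradicting the Latin property; hence $U = \{X_i\}$, so $|U| = 1 \leq 4 = d^2 n^{d-2}$. With $d = 2$ out of the way, the argument above is run for $d \geq 3$.
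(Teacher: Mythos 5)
Your proof is correct and follows essentially the same route as the paper: split $U$ according to whether the witnessing element of $X$ is $X_i$ (giving $d\cdot n^{d-2}$) or some $X_{i'}$ with $i'\neq i$ (giving $(n-1)\binom{d}{2}n^{d-3}$), using the fact that fixing coordinates of a Latin hypercube slice divides the count of $1$-elements by $n$. Your separate treatment of $d=2$, where the $n^{d-3}$ count degenerates, is a welcome extra precaution that the paper glosses over, but it does not change the substance of the argument.
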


\begin{proof}
Let $X_i=(i, j^*_1,\cdot, j^*_d)$.
For every $k\in [d]$, there are exactly $n^{d-2}$ $1$-elements of $A_i$ that share $j^*_k$ with $X_i$.
This accounts for at most $d\cdot n^{d-2}$ $1$-elements of $U$.

The second possibility is that a $1$-element of $A_i$ shares two or more indices with some $X_{i'}=(i', j'_1,\cdot, j'_d)$ for $i'\ne i$.
If we fix two coordinates $m,\ell \in [d]$, there are exactly $n^{d-3}$ $1$-elements of $A_i$ that share the indices $j'_{m}$ and $j'_{\ell}$ with $X_{i'}$.
There are $n-1$ possibilities to choose the index $i'$ and $\binom{d}{2}$ possibilities for $m$ and $\ell$,
so this accounts for at most $(n-1)\binom{d}{2} n^{d-3}$ elements of $U$.
Summing up the two values, the desired upper bound follows.
\end{proof}

The next claim states that for every element in $V\setminus U$, there exist exactly $d$ elements of $X$
that rule it out.

\begin{claim}
\label{cla:2}
For every $v=(i,j_1, \ldots ,j_d) \in V\setminus U$,
there exist exactly $d$ indices $i_1, \ldots, i_d$,
all different from each other and from $i$, such that $X_{i_k}$ rules out $v$.
\end{claim}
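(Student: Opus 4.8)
The plan is to identify the $d$ ruling-out variables explicitly and then argue they are distinct. Fix $v=(i,j_1,\ldots,j_d)\in V\setminus U$. For each coordinate position $k\in\{2,\ldots,d+1\}$ (using the convention that the first coordinate is the hyperplane index $i$, and writing $v=(i,j_1,\dots,j_d)$ with $j_{k}$ sitting in position $k+1$), consider the line of $A$ obtained from $v$ by fixing all coordinates except position $k+1$ and varying the first coordinate --- i.e. the line through $v$ in the ``hyperplane'' direction. Wait: more precisely, for each $k\in[d]$ let $\ell_k$ be the line obtained by fixing $v$'s coordinates in all positions except the first, except that we replace position $1$'s value by a free index. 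Since $A$ is a Latin hypercube in $0$-$1$ representation, this line contains exactly one $1$-element; call it $w_k=(i_k,j_1,\ldots,j_d)$ (same $j$'s, different hyperplane index $i_k$). The transversal $X$ picks exactly one $1$-element $X_{i_k}$ from hyperplane $A_{i_k}$; I claim $X_{i_k}$ rules out $v$. Indeed $X_{i_k}$ and $w_k$ both lie in $A_{i_k}$, and if $X_{i_k}\neq w_k$ then $X_{i_k}$ still shares at least the coordinate $j_k$ (position $k+1$) with $v$ provided we choose the line correctly --- so the construction needs care: I should instead take $w_k$ to be the unique $1$-element of $A_{i'}$, over all $i'\ne i$, that agrees with $v$ in exactly coordinate position $k+1$; this is where the hypotheses $v\notin U$ forces the relevant $X_{i'}$ to be unique.

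Here is the cleaner version. For $k\in[d]$, look at the set of hyperplanes $A_{i'}$ with $i'\ne i$, and ask which $X_{i'}$ shares the index $j_k$ (in position $k+1$) with $v$. Because $v\notin U$, $v$ shares at most one index with each $X_{i'}$, and $v$ does not share index $j_k$ with $X_i$ itself (again by $v\notin U$, since otherwise $v$ would share $j_k$ with $X_i$ and also share its first index $i$ with $X_i$, putting $v\in U$). Consider the line through the symbol-$1$ entries: in the $0$-$1$ array the line obtained by fixing all coordinates of $v$ except position $1$ and letting position $1$ vary is not the right object; instead fix positions $1$ and $k+1$ appropriately. The key structural fact I will use is that the $1$-elements of $A$ restricted to a fixed value in position $k+1$ and ranging over all hyperplanes form, after projecting away coordinate $k+1$, a permutation-like structure, so there is exactly one $X_{i'}$ (with $i'$ determined) whose position-$(k+1)$ coordinate equals $j_k$; call its hyperplane index $i_k$. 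That $X_{i_k}$ rules out $v$ since it shares coordinate $j_k$ with $v$.

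Having produced indices $i_1,\dots,i_d$ and shown each $X_{i_k}$ rules out $v$, I then show: (a) each $i_k\ne i$ --- this follows because $v\notin U$ means $v$ shares at most one coordinate with $X_i$, and that shared coordinate is the first one (value $i$), hence $X_i$ shares no $j_k$ with $v$ and does not rule out $v$; (b) the $i_k$ are pairwise distinct --- if $i_k=i_{k'}$ for $k\ne k'$ then the single variable $X_{i_k}$ would share both $j_k$ and $j_{k'}$ with $v$, forcing $v$ to share two indices with an element of $X$, i.e. $v\in U$, a contradiction; and (c) these are the only ruling-out variables --- any $X_{i'}$ that rules out $v$ must share some coordinate with $v$; it cannot be the first coordinate (that would need $i'=i$, excluded), so it shares some $j_k$, and by the uniqueness in the construction $i'=i_k$. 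This gives exactly $d$ ruling-out variables.

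The main obstacle I anticipate is setting up the ``uniqueness'' step cleanly: namely, that for each coordinate direction $k$ and each target value $j_k$ in that coordinate, there is exactly one hyperplane $A_{i'}$, $i'\ne i$, whose chosen transversal element $X_{i'}$ has position-$(k+1)$ coordinate equal to $j_k$. This is really a statement about transversals: a transversal hits each hyperplane once, and within each ``coordinate-$(k+1)=j_k$'' hyperplane of $A$ it must also use distinct values in the first coordinate --- combined with the fact that $v\notin U$ rules out degenerate coincidences. Pinning down exactly which property of transversals versus Latin hypercubes is invoked, and doing so without case analysis, is the delicate part; everything after that is the short distinctness argument in (a)--(c) above.
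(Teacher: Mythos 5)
Your ``cleaner version'' together with steps (a)--(c) is essentially the paper's proof, and the one step you flag as delicate and leave unresolved is in fact immediate. The existence and uniqueness, for each $k\in[d]$, of an element of $X$ whose $(k+1)$-st coordinate equals $j_k$ requires no ``permutation-like structure after projecting,'' no property of Latin hypercubes, and no non-degeneracy supplied by $v\notin U$: by the paper's definition, a transversal contains \emph{exactly one} $1$-element in every hyperplane, and the set of elements with $(k+1)$-st coordinate equal to $j_k$ is a hyperplane. Call that unique element $X_{i_k}$; it shares the index $j_k$ with $v$ and hence rules it out. The hypothesis $v\notin U$ is needed only where you use it in (a) and (b), to conclude $i_k\ne i$ (since $v$ already shares the first index with $X_i$) and $i_m\ne i_\ell$ (since $X_{i_m}=X_{i_\ell}$ would share two indices with $v$); your (c) is the same uniqueness statement read in the other direction. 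Your initial attempt via the line through $v$ in the first coordinate direction was indeed the wrong object --- it would produce only the single $1$-element agreeing with $v$ in all of $j_1,\ldots,j_d$ --- but the corrected version is exactly the argument in the paper.
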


\begin{proof}
Fix a coordinate $k\in [d]$.
Since $X$ is a transversal, there is a unique tuple $\lt(i_k,j_1^{(k)}, \ldots ,j_d^{(k)}\rt)$
such that $j_k^{(k)} = j_k$ and $X\lt(i_k,j_1^{(k)}, \ldots ,j_d^{(k)}\rt)=1$.
Thus, if $X_{i_k}$ precedes $X_i$ then by the time we see $X_i$
we already know of a $1$-element of $X$ that shares an index with $v$, and therefore $v$ is ruled out.

Note that $i_k \ne i$ because otherwise $v$ would share at least two indices, $i=i_k$ and $j_k=j_k^{(k)}$,
with $X_i$. Since $v \notin U$, $v$ can share at most one index with an element of $X$.

Similarly, $i_{m} \ne i_{\ell} $  for all distinct $m,\ell\in [d]$
because otherwise, $i_{m} = i_{\ell} $ implies
$\lt(i_{m},j_1^{(m)}, \ldots ,j_d^{(m)}\rt)=\lt(i_{\ell},j_1^{(\ell)}, \ldots ,j_d^{(\ell)}\rt)$, which means that $v$ shares at least two indices,
$j_m=j_m^{(m)}=j^{(\ell)}_{m}$ and $j_\ell=j_{\ell}^{(m)}=j_\ell^{(\ell)}$, with $\lt(i_{m},j_1^{(m)}, \ldots ,j_d^{(m)}\rt)$.
This contradiction provides the statement of the claim.
\end{proof}

Summarizing Claim~\ref{cla:1} and Claim~\ref{cla:2}, we obtain
\begin{align}
\mathbb{E}_{\alpha_j:j \ne i}[N_i] & = \sum_{v\in V} \Pr (v\mbox{ is legal})\nonumber\\
& =  \sum_{v\in U} \Pr (v\mbox{ is legal}) + \sum_{v\in V\setminus U} \Pr (v\mbox{ is legal})\nonumber\\
& \leq |U| + \lt(|V\setminus U|\rt) \alpha_i^d\nonumber\\
&\leq d^2\cdot n^{d-2} + \left(1 - \frac{d^2}{n}\right)n^{d-1}\alpha_i^d,
\label{formula3}
\end{align}
where the factor $\alpha_i^d$ in the second summand is the probability that for all $d$ indices guaranteed by Claim~\ref{cla:1},
the value of $\alpha$ is smaller than $\alpha_i$,
and therefore a particular element $v\in V\setminus U$ precedes all $d$ elements that could have ruled it out.

Now we are ready to put together the statements $\eqref{formula1},\eqref{formula2}$, and $\eqref{formula3}$:
\begin{align*}
\log(T(A)) &\leq \sum_{i=1}^n{\mathbb{E}_{X}\left[\mathbb{E}_{\alpha_i}
\left[\log \lt( d^2\cdot n^{d-2} + \left(1 - \frac{d^2}{n}\right)n^{d-1}\alpha_i^d\rt) \right]\right]}\\
& \leq n \int_0^1\log \lt( d^2\cdot n^{d-2} + \left(1 - \frac{d^2}{n}\right)n^{d-1}\alpha_i^d\rt) d\alpha_i\\
& = n\left(\log(n^{d-1})-d+o(1)\right)).
\end{align*}
Thus,
\[
T(A) \leq \left((1+o(1))\frac{n^{d-1}}{e^d}\right)^n,
\]
providing the theorem.

\section{Proof of Theorem~\ref{lowerbound}}
\label{sec:lower}
In this section, we present a probabilistic construction of a Latin square $L'=L'(N')$ of order $N'$, for every sufficiently large integer $N'$,
such that the number of transversals in $L'$ is $\left((1-o(1))\frac{N'}{e^2}\right)^{N'}$.

In what follows, we think of an order-$n$ Latin square $L$ as an $n\times n$ matrix over an alphabet $A$ of cardinality $n$.
The index pairs of $L$ are referred to as {\em positions}, and elements of the alphabet are the {\em symbols} of $L$.

Let us choose an integer $ b\geq 3$ such that with $n=b^2$, $k=\lfloor b-b^{9/10}\rfloor$, and $N=n^2 k$,
we obtain $N\leq N'-3$ as large as possible.
Furthermore, denote $\ell=n-bk$.
Our plan is as follows: first we construct a (probabilistic) Latin square $L$ of order $N$ with many transversals.
Then in the next step, we (deterministically) construct the desired order-$N'$ Latin square $L'$ from $L$,
with $N'$ between $N+3$ and $N + \ell n k=N+\Theta\left(N^{49/50}\right)$,
without decreasing the number of transversals significantly.

For the construction of $L$, we first take an order-$n$ Latin square $S$ over the alphabet $[n]$ that will define the {\em structure} of $L$.
We require $S$ to have a collection of $\ell$ disjoint transversals.
This is easy to achieve, as for every $n\geq 7$ there exists a pair of orthogonal Latin squares (see, e.g., \cite{BSP60}),
and so each of them has a decomposition into disjoint transversals.

Let $A_1,\ldots,A_n$ be a partition of $[N]$ into $n$ alphabets, each of size $nk$.
Let $S_1, \ldots, S_n$ be $n$ order-$(nk)$ Latin squares where the symbols of $S_i$ belong to the alphabet $A_i$,
and each of these squares has a decomposition into disjoint transversals. The existence of such squares was established in the previous paragraph.

Our desired Latin square $L$ is a block matrix, where each block is an order-$(nk)$ Latin square that corresponds to a position of $S$.
To construct $L$, for every position $(i,j)\in [n]^2$, we replace the symbol $s=S(i,j)$ in $S$ with a Latin square over the alphabet $A_{s}$.

Fix $\ell$ disjoint transversals in $S$, and call their positions {\em special}.
For every $i\in [n]$, we replace every symbol $i$ in a special position with the square $S_i$,
and every symbol $i$ in a non-special position with a random Latin square over the alphabet $A_i$,
chosen independently and uniformly at random from the set of such Latin squares.
We also call the positions of $L$ from the subsquares corresponding to special positions of $S$ {\em special},
and these subsquares themselves are also referred to as {\em special} subsquares.
In the future, whenever we refer to {\em subsquares} of $L$, we mean the $n^2$ Latin squares that we used to replace symbols of $S$.

We need the following two statements about $L$. The first statement is deterministic, as it only deals with special positions of $L$.
We will only use it to construct the slightly larger Latin square $L'$ from $L$.

\begin{observation}
\label{obs:special}
The Latin square $L$ contains a collection of $\ell nk$ disjoint transversals containing only special positions.
\end{observation}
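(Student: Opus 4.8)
The plan is to build all $\ell n k$ transversals \emph{blockwise}, combining two layers of structure: the $\ell$ fixed disjoint transversals of $S$ (which select the special subsquares of $L$), and the fixed transversal decompositions of the special subsquares $S_1,\dots,S_n$ (which pick entries inside each selected block). So first I would set up notation: write $t_1,\dots,t_\ell$ for the fixed disjoint transversals of $S$, say $t_r=\{(i,\sigma_r(i)):i\in[n]\}$ for a permutation $\sigma_r$, and put $s_r(i)=S(i,\sigma_r(i))$. Since $t_r$ is a transversal of $S$, the map $i\mapsto s_r(i)$ is a bijection of $[n]$, so as $i$ ranges over $[n]$ the special subsquares sitting at block-positions $(i,\sigma_r(i))$ are exactly $S_1,\dots,S_n$ in some order, and their alphabets $A_1,\dots,A_n$ partition $[N]$. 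For each $s\in[n]$ I would also fix a decomposition of $S_s$ into $nk$ pairwise disjoint transversals $\tau_{s,1},\dots,\tau_{s,nk}$, which exists by the construction of $L$.

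Next I would define, for each pair $(r,m)\in[\ell]\times[nk]$, the set $T_{r,m}$ of $N$ entries of $L$ obtained by taking, for every $i\in[n]$, the entries of the transversal $\tau_{s_r(i),m}$ of the subsquare at block-position $(i,\sigma_r(i))$, translated into the coordinates of $L$. The verification that each $T_{r,m}$ is a transversal of $L$ is then routine: the $n$ blocks $(i,\sigma_r(i))$ occupy a partition of the rows of $L$ and (because $\sigma_r$ is a permutation) a partition of the columns of $L$, and inside each block $\tau_{s_r(i),m}$ meets every row and column of that block exactly once, so $T_{r,m}$ meets every row and column of $L$ exactly once; and since each $\tau_{s_r(i),m}$ realizes every symbol of $A_{s_r(i)}$ once while the $A_s$ partition $[N]$, $T_{r,m}$ realizes every symbol of $[N]$ exactly once. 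By construction every entry of $T_{r,m}$ lies inside a special subsquare, hence in a special position of $L$.

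Finally I would check pairwise disjointness of the $T_{r,m}$. If $r\ne r'$, then $t_r$ and $t_{r'}$ are disjoint transversals of $S$, so the block-position sets $\{(i,\sigma_r(i))\}$ and $\{(i,\sigma_{r'}(i))\}$ are disjoint, and hence $T_{r,m}$ and $T_{r',m'}$ use entirely disjoint blocks of $L$. If $r=r'$ but $m\ne m'$, then $T_{r,m}$ and $T_{r,m'}$ use the same blocks, but within each block the entries come from $\tau_{s_r(i),m}$ and $\tau_{s_r(i),m'}$, which are disjoint as members of a transversal decomposition of that subsquare. Thus $\{T_{r,m}\}_{(r,m)\in[\ell]\times[nk]}$ is a family of $\ell n k$ pairwise disjoint transversals of $L$, all lying in special positions, as claimed. (As a sanity check one may note that these transversals in fact exhaust all $\ell n\cdot(nk)^2=\ell n^3k^2=(\ell nk)N$ special positions of $L$.)

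I do not expect a genuine obstacle here; the statement is essentially a bookkeeping consequence of the block construction. The only point that needs care is the alignment step — using a \emph{common} index $m$ across the $n$ blocks of a given $t_r$ — and checking that this causes no collisions, which it cannot precisely because inside any shared block the inner transversals used by two of our sets are either identical (when the parameters $(r,m)$ coincide) or disjoint.
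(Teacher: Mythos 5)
Your argument is correct and is precisely the intended one: the paper states this as an observation without proof, the implicit justification being exactly your blockwise combination of the $\ell$ disjoint transversals of $S$ with the fixed transversal decompositions of the special subsquares $S_1,\dots,S_n$. All your verifications (rows, columns, symbols via the partition $A_1,\dots,A_n$, and pairwise disjointness) are sound, so there is nothing to add.
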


The second statement gives a probabilistic lower bound for the actual count of transversals in $L$, and later in $L'$.

\begin{lemma}
\label{lem:non-special}
The Latin square $L$ has in expectation at least $\left((1-o(1))\frac{N}{e^2}\right)^N$ transversals not containing any special positions.
\end{lemma}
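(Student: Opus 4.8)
The plan is to estimate the expected number of non-special transversals of $L$ by a linearity-of-expectation argument over a cleverly chosen "skeleton" of candidate transversals, and then to show that the dominant contribution already matches the target bound. Concretely, a transversal of $L$ avoiding special positions must pick, for each block it uses, a position inside that block; since $L$ has block structure coming from $S$, a non-special transversal induces on the index set $[n]^2$ of blocks a combinatorial structure where in each of the $n$ block-rows and $n$ block-columns the chosen blocks partition the $nk$ rows (resp. columns) of $L$, and moreover the blocks used must correspond, via $S$, to a multiset of symbols that is exactly balanced so that each alphabet $A_s$ is hit exactly $nk$ times. The natural first step is therefore to restrict attention to a convenient family of such block-patterns: for instance, fix a set of $k$ disjoint non-special transversals of $S$ (these exist because $S$ has a decomposition into disjoint transversals, of which at least $n-\ell = bk \geq k$ are non-special after removing the $\ell$ special ones), and only count transversals of $L$ that, block-by-block, follow one of these $k$ transversals of $S$ in a prescribed way. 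This reduces the global constraint on symbols/alphabets to $n$ independent local problems, one per alphabet $A_s$.

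Second, I would set up the count inside a single alphabet. For a fixed symbol $s \in [n]$, the $n$ blocks of $L$ carrying alphabet $A_s$ (one per row of $S$, one per column of $S$, arranged as a "generalized diagonal") together form, after an appropriate relabeling, an order-$(n \cdot nk)$ Latin-square-like array; but more usefully, the relevant blocks that our skeleton actually uses number $k$ (one for each of the $k$ chosen transversals of $S$), and these $k$ blocks are $k$ independent uniformly random order-$(nk)$ Latin squares over $A_s$ (here is where we use that non-special blocks are chosen uniformly and independently). A transversal of $L$ following the skeleton, when restricted to these $k$ blocks, must select $nk$ cells hitting each of the $nk$ symbols of $A_s$ exactly once and each of the relevant $k \cdot nk = n^2 k/b = \Theta(N^{2/3})$ rows and columns exactly once — i.e., it decomposes into a "transversal" of a random $k \times k$ arrangement of random Latin squares of order $nk$. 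The expected number of such selections is computed via linearity of expectation: the number of candidate selections times the probability that a fixed candidate is consistent with the randomness, the latter being a product of probabilities of the form $\Pr[\text{cell } (r,c) \text{ of a random order-}m\text{ Latin square carries symbol } \sigma] = 1/m$ over the $nk$ cells, with $m = nk$.

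Third, multiply the local counts. If each alphabet contributes in expectation a factor of roughly $\left(\frac{nk}{e^2}\right)^{nk}$ worth of local selections — which is exactly the permanent-type estimate $\big((1+o(1)) m/e\big)^m$ for the number of transversal-selections divided by the $m^m$-type normalization, applied with $m = nk$ — then over the $n$ alphabets we get $\left((1-o(1))\frac{nk}{e^2}\right)^{n \cdot nk} = \left((1-o(1))\frac{nk}{e^2}\right)^{N/n}$; one then checks that the number of ways to interleave these $n$ local transversals into a genuine global transversal of $L$, together with the choice of skeleton, contributes the remaining factors so that the product is $\left((1-o(1))\frac{N}{e^2}\right)^N$. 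The bookkeeping here is where one must be careful: $N = n^2 k$ and $nk \approx b^3 \cdot (1 - b^{-1/10}) \approx N/b \approx N/\sqrt{n}$, so $\frac{nk}{e^2}$ is not $\frac{N}{e^2}$, and the exponents and the combinatorial interleaving factors must be tracked so that the per-alphabet bound $\big((1+o(1))nk/e^2\big)^{nk}$ raised to the number of alphabets, times the number of consistent gluings, reassembles to $\big((1-o(1))N/e^2\big)^N$. This is exactly the role of the peculiar choice $k = \lfloor b - b^{9/10}\rfloor$ and $n = b^2$: it makes $nk$ large enough that asymptotic permanent estimates apply with negligible error, while the slack between $bk$ and $n$ (namely $\ell = n - bk = \Theta(n^{19/20})$) is small enough not to affect the leading term.

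The main obstacle I expect is the lower bound on the expected number of local transversal-selections in $k$ independent random Latin squares of order $m = nk$, i.e., showing that it is at least $\big((1-o(1)) m/e^2\big)^{m}$ up to the appropriate normalizing powers. For a single random Latin square this is essentially the Egorychev--Falikman / van der Waerden lower bound on permanents (the expected number of transversals of a random order-$m$ Latin square is at least $m!/m^m \cdot \big((1-o(1))m\big)^{?}$-type quantity), but here the selection must be coordinated across $k$ blocks sharing row- and column-indices, so one needs a second-moment or concentration argument — or an entropy/Brégman-type lower bound — to guarantee that a positive (indeed $(1-o(1))$) fraction of the naive count survives the independence. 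A clean way around this is to only demand an expectation lower bound (which is all the lemma claims) and to compute $\mathbb{E}[\#\text{selections}]$ exactly by linearity: the number of candidate tuples of cells respecting the row/column constraints across the $k$ blocks is a product of falling-factorial-type counts, and each is consistent with probability $\prod 1/m$; the only real work is verifying that this product equals $\big((1-o(1)) m/e^2\big)^{m}$ via Stirling, which is a routine but delicate computation. The $o(1)$ error terms from Stirling, from $\ell/n = \Theta(n^{-1/20})$, and from $n$ vs. $b^2$ must all be collected and shown to be $o(1)$ in the exponent of $N$.
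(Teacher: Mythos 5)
Your overall strategy---restrict to a family of candidate permutation matrices with a prescribed block profile, count them, and multiply by the probability that a fixed candidate is symbol-consistent---is exactly the paper's strategy. However, your choice of skeleton is fatally too restrictive, and the shortfall is quantitative, not just a matter of bookkeeping. By supporting the transversal only on the $kn$ blocks lying on $k$ disjoint transversals of $S$, you force each used block to contribute $n$ cells. The number of permutation matrices of order $N$ with this block profile is
\[
\frac{\bigl((nk)!\bigr)^{2n}}{(n!)^{kn}}=\left((1+o(1))\frac{nk^{2}}{e}\right)^{N}=\left((1+o(1))\frac{N}{be}\right)^{N},
\]
since $nk^{2}=N\cdot k/n\approx N/b$. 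The symbol-consistency probability of any fixed candidate is at most about $e^{-(1-o(1))N}$ (each alphabet must be rainbow on $nk$ cells, contributing roughly $(nk)!/(nk)^{nk}$ per alphabet), so the expected number of transversals supported on your skeleton is at most about $\bigl(N/(be^{2})\bigr)^{N}$, which misses the target by a factor of $b^{N}=N^{(1/5-o(1))N}$. Neither the choice of skeleton (there are only $e^{o(N)}$ such choices) nor any ``interleaving'' factor can recover this: interleaving is a constraint that shrinks the count, not a multiplicity that inflates it. Your own unease in the third paragraph (noting that $(nk/e^{2})^{nk}$ per alphabet does not obviously reassemble to $(N/e^{2})^{N}$) is pointing at exactly this irreparable loss.

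The paper avoids this by taking exactly $b$ cells from \emph{every} one of the $n(n-\ell)$ non-special blocks. The relevant quantity is $\prod_{\text{blocks}}m_{ij}!$ in the denominator of the candidate count $\bigl((nk)!\bigr)^{2n}/\prod m_{ij}!$: spreading the $N$ cells thinly gives $(b!)^{n(n-\ell)}=\bigl((1+o(1))b/e\bigr)^{N}$, whereas your profile gives $(n!)^{kn}=\bigl((1+o(1))n/e\bigr)^{N}=\bigl((1+o(1))b^{2}/e\bigr)^{N}$; the ratio is precisely the lost $b^{N}$. Two secondary problems: your reduction to ``$n$ independent local problems'' is not valid as stated, because blocks carrying different alphabets share block-rows and block-columns, so the row/column constraints couple the alphabets (the paper handles this by fixing the whole permutation matrix first and only then conditioning on symbols); and the assertion that the $nk$ selected cells of one alphabet hit ``each of the $k\cdot nk$ relevant rows and columns exactly once'' is impossible, since $nk<k\cdot nk$.
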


\begin{proof}
We consider only transversals that contain exactly $b$ positions from every non-special subsquare of $L$.

First, we bound the number of ``potential non-special transversals'', or in other words permutation matrices containing $b$ positions from every non-special subsquare.
To choose the $b$ positions in the first non-special subsquare, we have $\binom{kn}{b}$ possibilities to determine the rows,
and the same number of possibilities for the columns, and we should multiply the product by $b!$,
since we are not interested in the order in which the $b$ positions are chosen.
We end up with $\left(kn(kn-1)\cdots (kn-b+1)\right)^2/b!$ choices.

Similarly, dealing with non-special subsquares one by one and choosing all $b$ positions in each of them before continuing to the next subsquare,
assume that we arrive at a (non-special) subsquare
such that we already have chosen $b$ positions from $i$ subsquares in the same row and $j$ subsquares from the same column of $S$.
To choose $b$ positions in the current subsquare, we need to choose $b$ rows out of $kn-ib$ possible rows,
and $b$ columns out of $kn-jb$ possible columns,
and combine them to get $b$ positions.
This results in a total number of
\[\left((kn-ib)\cdots (kn-ib-b+1)\right)\cdot \left((kn-jb)\cdots (kn-jb-b+1)\right)/b!\]
possible choices.

Notice crucially that independently of the order in which we are dealing with the subsquares,
for every $i< n-\ell$, there will be exactly $n-\ell$ non-special subsquares
for which we dealt with exactly $i$ subsquares in the same row of $S$ before,
and similarly exactly $n-\ell$ times
we will deal with a non-special subsquare for which we dealt with exactly $i$ subsquares in the same column of $S$ before.
We end up with a total number of
\[\frac{(kn!)^{2n}}{(b!)^{n(n-\ell)}}=n^{O(1)}\frac{(kn)^{2n^2k}}{e^{2n^2k}}\cdot \frac{e^{bn(n-\ell)}}{b^{bn(n-\ell)}}=(1-o(1))^N\frac{N^N}{e^{N}}
\]
such permutation matrices.

Now we need to estimate the probability $P$ for an arbitrary such permutation matrix to correspond to an actual transversal in $L$.
Notice that we are not aiming for the best possible estimations,
but just for those that will suffice for our purposes.
Since we already made sure that there is exactly one position from every row and every column,
there are just two bad events left:
\begin{itemize}
\item
$E_1$: Two ones of the permutation matrix from different subsquares of $L$ correspond to the same symbol.
\item
$E_2$: Two ones of the permutation matrix from the same subsquare of $L$ correspond to the same symbol.
\end{itemize}

We interpret the random process of creating $L$ from $S$ as follows.
We replace non-special entries of $S$ by subsquares one-by-one, and check whether $E_1$ or $E_2$ occurred, after every replacement.

We consider $E_2$ first.
Formally, we have $b$ fixed positions, draw a random Latin square over a given alphabet, and are wondering whether in any two of the chosen positions,
the symbols are the same.
But since permuting the rows and columns of a random Latin square does not change the probability space,
we can assume that we are given an {\em arbitrary} Latin square, and chose $b$ {\em random} positions in it from different rows and columns.
Notice that it is not important now what choices we made in other subsquares, neither the positions (their rows and columns) nor the symbols there!
For a fixed non-special subsquare of $L$ that we are now looking at, assume that we already
chose some number $0\leq i < b$ of positions
in different rows and different columns, and all corresponding symbols were different -
and we are now choosing a random $(i+1)$-st position from the remaining $(kn-i)\times(kn-i)$-square.
To estimate the desired probability, observe that every symbol from one of the already chosen positions occurs at most $kn-i$ times
in the remaining $(kn-i)\times(kn-i)$-square.
Therefore, the probability that the symbol in the $(i+1)$-st chosen position is different from the symbols in the previously chosen $i$ positions
is at least $1-\frac{i}{kn-i}=1-o(1)$.
Hence, the probability that the symbols we chose from the above considered subsquare of $L$
were pairwise distinct
is $e^{o(b)}$.

For the probability of $(E_1|E_2)$,
assume that we dealt with $0\leq j<n-\ell$ subsquares over the same alphabet as $L$ and chose $b$ positions with distinct symbols from each of them,
and are now dealing with the next subsquare (over the same alphabet).
Furthermore,
since we can also permute the symbols of the alphabet without changing the probability space of a random Latin square,
we see that,
conditioned on the event that all $b$ chosen symbols were pairwise distinct,
the probability that all of them were different from the $jb$ previously chosen symbols from other subsquares over the same alphabet
is exactly
\[\binom{kn-jb}{b}/\binom{kn}{b}.\]
Thus, we obtain
\[P \geq e^{o(N)}\left(\prod_{j<n-\ell}\binom{kn-jb}{b}/\binom{kn}{b}\right)^n \geq  e^{o(N)}\left((kn)!/(kn)^{kn}\right)^n= (1-o(1))^N/{e^{N}} .\]

Therefore, the expected number of transversals in $L$ not containing any special position is at least
\[(1-o(1))^N\frac{N^N}{e^{N}}\cdot (1-o(1))^N/{e^{N}},
\]
and the lemma follows.
\end{proof}
In particular, there exists an order-$N$ Latin square $L$ with at least $\left((1-o(1))\frac{N}{e^2}\right)^N$ transversals not containing any special positions, and satisfying the statements of Observation~\ref{obs:special}.

We are now ready to prove Theorem~\ref{upperbound}.
Based on the Latin square $L$, we construct a Latin square $L'$ of order $N'$.

Let us take an alphabet $A^*$ of size $s:=N'-N$ that is disjoint from the alphabet of $L$ - we assume w.l.o.g. that $A^*=[s]$.
We put in the upper left $N\times N$-corner of $L'$ a copy of $L$,
and in the bottom right $s\times s$-corner of $L'$ an arbitrary Latin square $L^*$ over the alphabet $A^*$
containing at east one transversal.
(Notice that the existence of such Latin square $L^*$ is trivial since $s>2$.)
Finally, in $L'$, we replace $s$ of the special transversals of $L$ by symbols from $A^*$ as follows.
One after another, we take a symbol $i\in A^*$ and a special transversal from $L$ that we have not dealt with yet.
For every position $(x,y)$ from this transversal,
we put $i$ into the position $(x,y)$, and the symbol that was originally in the position $(x,y)$ in $L$,
we move into the positions $(x,N+y)$ and $(N+x, y)$ in $L'$.
The resulting $L'$ is therefore a Latin square,
and the number of transversals in $L'$ is at least as large as the number of transversals in $L$ consisting only of non-special positions,
proving the theorem.

The only condition on the relation between $N$ and $N'$ that needs to be satisfied is
$N'\in \{N+3, \ldots, N+\ell n\}$ - and this is easy to achieve, since increasing $b$ by one in the construction of $L$
increases $N$ by at most $2N^{1/5}$,
and $\ell n =\Theta\left(N^{49/50}\right)$.

\section{Concluding remarks and open questions}
\label{sec:concl}

Similarly to Theorem~\ref{lowerbound}, and keeping in mind the upper bound from Theorem~\ref{upperbound},
it is natural to consider the lower bound in the higher dimensional case.
Analogously to the proof of Lemma~\ref{lem:non-special}, choosing parameters $b,~n,~k$ as in that proof
and $N=kn^d$, we can prove the following.

\begin{theorem}
\label{lowerbound2}
For every $d \geq 3$ and for infinitely many values of $N$,
\[
T(d,N) \geq \left((1-o(1))\frac{N^{d-1}}{e^d}\right)^{N}.
\]
\end{theorem}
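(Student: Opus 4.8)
The plan is to transport the probabilistic construction of Section~\ref{sec:lower} to dimension $d$. Following the proof of Lemma~\ref{lem:non-special}, take $b\to\infty$, put $n=b^{2}$, $\mu=b\,n^{d-1}$ and $N=n\mu=b\,n^{d}$ (an infinite set of values of $N$), and partition $[N]$ into $n$ alphabets $A_{1},\dots,A_{n}$ of size $\mu$. Fix any order-$n$ $d$-dimensional Latin hypercube $S$, and build a random order-$N$ $d$-dimensional Latin hypercube $L$ by replacing each cell $c$ of $S$, which carries some symbol $s=S(c)$, with an independent uniformly random order-$\mu$ $d$-dimensional Latin hypercube over $A_{s}$. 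I would only count transversals of $L$ that take exactly $b$ cells from each of the $n^{d}$ subcubes, and bound their expected number from below; as in Section~\ref{sec:lower} this splits into counting the ``potential transversals'' — here, permutation hypermatrices of $[N]^{d}$ (sets of $N$ cells, no two sharing a coordinate) using exactly $b$ cells per subcube — and bounding the probability that a fixed potential transversal has $N$ distinct symbols. (If one insists on the exact values $N=kn^{d}$ of the statement, one keeps $\mu=kn^{d-1}$ and, exactly as in Section~\ref{sec:lower}, reserves $\ell=n^{d-1}(1-k/b)$ special subcubes per slab of $S$; since we never enlarge the order, the analogue of Observation~\ref{obs:special} and the passage from $L$ to $L'$ are simply omitted, and taking $\ell=0$ as above is cleaner.)

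\textbf{Counting potential transversals.} I would proceed subcube by subcube, and for each of the $d$ coordinate directions keep track of the number $p_{t}\in\{0,\dots,n^{d-1}-1\}$ of coordinates of the current slab of $S$ that are already occupied; the number of ways to place the $b$ cells in the current subcube is then $\prod_{t=1}^{d}(\mu-p_{t}b)(\mu-p_{t}b-1)\cdots(\mu-p_{t}b-b+1)/b!$. In each slab of each direction the $p_{t}$ run over all of $\{0,\dots,n^{d-1}-1\}$ and $bn^{d-1}=\mu$, so the product telescopes to $(\mu!)^{nd}/(b!)^{n^{d}}$. By Stirling this is $(1-o(1))^{N}\mu^{Nd}/e^{(d-1)N}$, and the elementary identity $d(2d-1)=(d-1)(2d+1)+1$, which with $n=b^{2}$ says $\mu^{Nd}=b^{N}\,N^{(d-1)N}$, turns this into $(1-o(1))^{N}N^{(d-1)N}/e^{(d-1)N}$ potential transversals.

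\textbf{Realisation probability.} Fixing a potential transversal, I would bound from below the probability $P$ that its $N$ symbols are pairwise distinct, by excluding a coincidence inside one subcube ($E_{2}$) and a coincidence between two subcubes over a common alphabet ($E_{1}$), just as in the proof of Lemma~\ref{lem:non-special}. Since the law of a uniformly random Latin hypercube is invariant under permuting rows, columns and symbols, the $b$ chosen cells of a subcube may be treated as a uniformly random partial permutation hypermatrix of a fixed hypercube; a first-moment bound gives $\Pr(\overline{E_{2}})\ge e^{-O(b^{2}/\mu)}$ per subcube, hence $e^{-o(N)}$ across all $n^{d}$ of them. Conditioned on $\overline{E_{2}}$, the $b$ symbols used by a subcube form a uniformly random $b$-subset of its alphabet, independently over the $n^{d-1}$ subcubes sharing an alphabet, so processing them with factors $\binom{\mu-jb}{b}/\binom{\mu}{b}$ gives $\Pr(\overline{E_{1}}\mid\overline{E_{2}})\ge(\mu!/\mu^{\mu})^{n}\ge e^{-N}$. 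Multiplying, the expected number of admissible transversals of $L$ is at least $(1-o(1))^{N}N^{(d-1)N}/e^{dN}=\bigl((1-o(1))N^{d-1}/e^{d}\bigr)^{N}$, so some $L$ realises this and Theorem~\ref{lowerbound2} follows, matching Theorem~\ref{upperbound}.

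\textbf{Main obstacle.} Conceptually nothing leaves the orbit of Section~\ref{sec:lower}; the one place that needs genuine care is the exponent bookkeeping of the first step — checking that the $d$ directions really contribute $(\mu!)^{nd}$, that the resulting power of $\mu$ coincides with $N^{(d-1)N}$ up to a $(1-o(1))^{N}$ factor, and that every Stirling correction (the square-root factors in Stirling's formula raised to powers $nd$ and $n^{d}$, the per-subcube losses $e^{-O(b^{2}/\mu)}$, and, in the special-positions variant, the slack coming from $k<b$) is $e^{o(N)}$ under the choice $n=b^{2}$, $\mu=\Theta(bn^{d-1})$. A lesser point is that, unlike for Latin squares — where an orthogonal mate of $S$ supplies a transversal decomposition for free — the special-positions variant would need an order-$n$ $d$-dimensional Latin hypercube with enough disjoint transversals; but since only infinitely many $N$ are required, this is sidestepped by taking $\ell=0$, or by letting $S$ be a linear hypercube over $\mathbb{Z}_{n}$ with $\gcd(d,n)=1$.
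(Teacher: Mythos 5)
Your proposal is correct and is essentially the paper's intended argument: the paper proves Theorem~\ref{lowerbound2} only by the remark that it follows ``analogously to the proof of Lemma~\ref{lem:non-special}'' with $N=kn^d$, and your write-up is a faithful $d$-dimensional transcription of that proof, with the telescoping count $(\mu!)^{nd}/(b!)^{n^d}$ and the $E_1$/$E_2$ probability bounds playing exactly the roles they play in Section~\ref{sec:lower} (your exponent identity $\mu^{d}=bN^{d-1}$ checks out). Taking $\ell=0$ and settling for $N$ of the form $bn^{d}$ is a legitimate simplification since only infinitely many $N$ are claimed.
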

We strongly believe that, similarly to Latin squares, this statement is true for {\em every} value of $N$.

Another natural question concerns the distribution of $T(L)$.
Our result implies that for a certain collection of Latin squares,
the number of transversals is $(1-o(1))^n\cdot T(n)$.
It is natural to wonder whether the much more general statement is true
and $T(L)=(1-o(1))^n \cdot T(n)$ for almost every order-$n$ Latin square $L$?

Finally, analogously to the question about $T(n)$,
let $t(n)$ denote the {\em minimum} number of transversals over all order-$n$ Latin squares. For even $n$ it is known that $t(n)=0$.
As mentioned in the introduction, the most important open problem in the study of transversals is Ryser's conjecture, which states that $t(n)>0$ for odd $n$.
Can we strengthen the previous question about the distribution of $T(L)$ in the case of Latin squares of odd order?

\begin{question}
\label{q:t=T}
Is it true that $t(n)=(1-o(1))^n\cdot T(n)$ for odd $n$?
\end{question}

\end{document}